\newtheorem{theor}{Theorem}[section]
\theoremstyle{definition} 
\theoremstyle{remark} \newtheorem{rem}{Remark}[section]
\newcommand{\pn}{\par\noindent} \newcommand{\pmn}{\par\medskip\noindent}
\begin{document}
\title{An associative Latin square is a group. A very short proof}
\author{Yury Kochetkov}\footnote{Department of Applied Mathematics,
Higher School of Economics, Moscow, Russia}
\date{}
\keywords{}
\begin{abstract} A Latin square of order $n$ with symbols
$a_1,\ldots,a_n$ can be considered as a multiplication table for
binary operation in the set $A=\{a_1,\ldots,a_n\}$. We prove that,
if this operation is associative, then $A$ is a group.
\end{abstract}

\email{yukochetkov@hse.ru, yuyukochetkov@gmail.com} \maketitle

\section{A short introduction} \pn A Latin square of order $n$ with symbols
$a_1,\ldots,a_n$ is a $n\times n$ table, where each row and each
column is a permutation of these symbols \cite{DK,W}. The table
below is a Latin square of order 4 with symbols $\{1,2,3,4\}$
\[\begin{tabular}{|c|c|c|c|} \hline 2&3&1&4\\ \hline 1&4&2&3\\ \hline
3&1&4&2\\ \hline 4&2&3&1\\ \hline \end{tabular}\] A Latin can be
considered as the multiplication table of a binary operation
defined in the set $A=\{a_1,\ldots,a_n\}$. In what follows, words
"an associative Latin square", "a Latin square with unit" and so
on, mean properties of the corresponding binary operation. \pmn
The multiplication table of a finite group is a Latin square
\cite{S}. So we can formulate a natural problem: "When a Latin
square is the multiplication table of some group?". It turns out
that the associativity is a sufficient property here, i.e., if a
Latin square is associative, then the corresponding binary
operation has a unit and each element of $A$ has an inverse. \pmn
\begin{rem} The author did not find this result in literature.
\end{rem}

\section{Theorem and its proof}
\pn \begin{theor} An associative Latin square is the
multiplication table of some group. \end{theor}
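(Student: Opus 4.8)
The plan is to exploit the two division properties encoded in any Latin square: for every $a,b\in A$ there exist \emph{unique} $x,y\in A$ with $ax=b$ and $ya=b$ (existence because each row and each column of the table is a permutation of $a_1,\ldots,a_n$; uniqueness because the entries of a row, resp.\ a column, are pairwise distinct). Together with the assumed associativity, $A$ becomes a semigroup in which both one-sided equations are always solvable, and I will show that this forces $A$ to be a group. The operation is automatically closed and well defined, since the table is $n\times n$ with entries in $A$, so the only things left to exhibit are a two-sided identity and two-sided inverses.

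First I would produce a right identity. Fix any $a\in A$ and let $e\in A$ be the unique solution of $ae=a$. For an arbitrary $b\in A$, use the column property to write $b=ya$ for some $y\in A$; then $be=(ya)e=y(ae)=ya=b$. Hence this single $e$ is a right identity for \emph{every} element of $A$. Symmetrically, let $f\in A$ be the unique solution of $fa=a$; writing an arbitrary $b$ as $b=az$ with $z\in A$, we get $fb=f(az)=(fa)z=az=b$, so $f$ is a left identity for every element. Then $f=fe=e$, and so $e$ is a two-sided identity; denote it $1$.

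With $1$ in hand, inverses are immediate: for $a\in A$ let $x$ solve $ax=1$ and let $y$ solve $ya=1$. Then $y=y1=y(ax)=(ya)x=1x=x$, so $x=y$ is a two-sided inverse of $a$. Collecting the pieces — associativity (hypothesis), the identity $1$, and inverses — shows that $A$ is a group, which is the claim.

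The step that needs the Latin-square hypothesis in an essential way (beyond mere associativity) is the passage from ``there is an $e$ with $ae=a$'' to ``the same $e$ satisfies $be=b$ for all $b$'': this rests on solvability of $ya=b$, which lets an arbitrary element be rewritten through the fixed element $a$ on which $e$ was defined. I expect this to be the only real point of the argument; every other step is a one-line manipulation using associativity and the uniqueness of solutions.
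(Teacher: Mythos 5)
Your proof is correct, and it takes a genuinely different route to the identity element than the paper does. The paper first establishes, by contradiction, that the square contains an idempotent $a_k$ (if no element were idempotent, then from $a_1\cdot a_k=a_1$ and $a_k\cdot a_k=a_l$ with $l\neq k$ one gets $a_1$ appearing twice in the first row), and only then upgrades that idempotent to a two-sided unit by showing that $a_i\cdot a_k\neq a_i$ or $a_k\cdot a_i\neq a_i$ would force a repeated entry in the $k$-th column or row. You bypass the idempotent entirely: you define $e$ by $ae=a$ for one fixed $a$ and propagate the identity property to every $b$ by solving $ya=b$, which is the classical ``associative quasigroup is a group'' argument. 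Your version is arguably cleaner --- it needs no proof by contradiction and isolates exactly where the Latin-square hypothesis enters (the solvability of $ya=b$ and $az=b$), whereas the paper's version stays closer to the combinatorics of the table, phrasing every step as ``this symbol would occur twice in a row/column.'' The two treatments of inverses are essentially identical: both show a left inverse equals a right inverse via a three-term associativity computation. One small remark: you invoke \emph{uniqueness} of the solutions $x$ and $y$, but your argument only ever uses their \emph{existence}; that is harmless, just slightly more than you need.
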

\begin{proof} \underline{The first step.} Let $Q$ be our Latin square. At
first we want to prove that $Q$ contains an idempotent, i.e. such element
$a$ that $a\cdot a=a$. Let us assume that there are no idempotents in $Q$,
i.e. $a_i\cdot a_i\neq a_i$, $i=1,\ldots,n$. As the first row of $Q$ contains
$a_1$, then $a_1$ is some position $k$, $k>1$. It means that
$a_1\cdot a_k=a_1$. As $a_k$ is not an idempotent then $a_k\cdot a_k=a_l$,
$l\neq k$. Then
$$a_1=a_1\cdot a_k=(a_1\cdot a_k)\cdot a_k=a_1\cdot (a_k\cdot a_k)=
a_1\cdot a_l.$$ And we have a contradiction, because the first row
contains $a_1$ in two positions: the $k$-th and the $l$-th. \pmn
\underline{The second step.} Thus, $a_k$ is an idempotent.
Actually, $a_k$ is a unit. Let us assume that it is wrong and
$a_i\cdot a_k=a_j$, $i\neq j$, for some $i$. We have
$$a_j=a_i\cdot a_k=a_i\cdot (a_k\cdot a_k)=(a_i\cdot a_k)\cdot a_k=
a_j\cdot a_k.$$ It means that the $k$-th column contains two elements $a_j$:
in the $i$-th and in the $j$-th positions. Now let $a_k\cdot a_i=a_j$,
$i\neq j$, for some $i$. We have
$$a_j=a_k\cdot a_i=(a_k\cdot a_k)\cdot a_i=a_k\cdot (a_k\cdot a_i)=
a_k\cdot a_j.$$ It means that the $k$-th row contains two elements
$a_j$: in the $i$-th position and in the $j$-th position. \pmn
\underline{The third step.} Thus, $a_k$ is a unit. Now we must
prove that each element $a_i$ has an inverse. It is easy. The
$i$-th row and the $i$-th column contains element $a_k$ (in the
$l$-th and in the $m$-th position, respectively). It means that
$a_i\cdot a_l=a_k$ and $a_m\cdot a_i=a_k$. But
$$a_l=a_k\cdot a_l=(a_m\cdot a_i)\cdot a_l=a_m\cdot (a_i\cdot a_l)=
a_m\cdot a_k=a_m$$. \end{proof} \begin{rem} There exists a
non-associative Latin square with a unit and inverse elements, for
example
\[\begin{tabular}{|c|c|c|c|c|c|} \hline 1&2&3&4&5&6\\ \hline 2&3&1&5&6&4\\
\hline 3&1&2&6&4&5\\ \hline 4&6&6&1&2&3\\ \hline 5&4&6&2&3&1\\
\hline 6&5&4&3&1&2\\ \hline  \end{tabular}\] here
$$3=6\cdot 4=(4\cdot 2)\cdot 4\neq 4\cdot (2\cdot 4)=4\cdot 5=2.$$
\end{rem}

\vspace{5mm}

\end{document}